\newcommand{\one}{\mathds{1}}
\newcommand{\X}{\mathbb{X}}
\newcommand{\R}{\mathbb{R}}
\newcommand{\mres}{\mathop{\hbox{\vrule height 7pt width .5pt depth 0pt
\vrule height .5pt width 6pt depth 0pt}}\nolimits}
\newcommand{\eps}{\varepsilon}
\def\d{\mathrm{d}}
\def\mod{\mathrm{Mod}}
\def\R{\mathbb{R}}
\def\N{\mathbb{N}}
\def\loc{\mathrm{loc}}
\def\one{\mathds{1}}
\def\lploc{L^p_{\loc}}
\def\lip{\mathrm{Lip}}
\def\omt{\Omega_\tau}
\def\leb{\mathscr{L}}
\def\ztau{(0,\tau)}
\def\Y{\mathbf{Y}}
\def\S{\mathcal{S}}
\theoremstyle{plain}
	\newtheorem{theorem}{Theorem}[section]
\theoremstyle{definition}
	\newtheorem{definition}[theorem]{Definition}
	\newtheorem{open problem}[theorem]{Open Problem}
	\newtheorem{remark}[theorem]{Remark}
\newcommand{\subjclass}[2][1991]{
  \let\@oldtitle\@title
  \gdef\@title{\@oldtitle\footnotetext{#1 \emph{Mathematics subject classification.} #2}}
}
\newcommand{\keywords}[1]{
  \let\@@oldtitle\@title
  \gdef\@title{\@@oldtitle\footnotetext{\emph{Key words and phrases.} #1.}}
}
\title{Time-smoothing for parabolic variational\\problems in metric measure spaces}
\author{Vito Buffa\footnote{Bologna, Italy. E-mail: bff.vti@gmail.com. \textsf{ORCID iD}: 0000-0003-4175-4848.}}
\date{\today}
\keywords{Parabolic variational problems, time-smoothing, metric measure spaces, Sobolev spaces, parabolic Sobolev spaces}
\subjclass[2020]{Primary 30L99, 35K99. Secondary 35A15, 49J27.}
\begin{document}

\maketitle

\begin{abstract}\noindent 
In 2013, Masson and Siljander determined a method to prove that the $p$-minimal upper gradient $g_{f_\eps}$ for the time mollification $f_\eps$, $\eps>0$, of a parabolic Newton-Sobolev function $f\in L^p_\loc(0,\tau;N^{1,p}_\loc(\Omega))$, with $\tau>0$ and $\Omega$ open domain in a doubling metric measure space $(\X,d,\mu)$ supporting a weak $(1,p)$-Poincar\'e inequality, $p\in(1,\infty)$, is such that $g_{f-f_\eps}\to0$ as $\eps\to0$ in $L^p_\loc(\omt)$, $\omt$ being the parabolic cylinder $\omt\coloneqq\Omega\times\ztau$. Their approach involved the use of Cheeger's differential structure, and therefore exhibited some limitations; here, we shall see that the definition and the formal properties of the parabolic Sobolev spaces themselves allow to find a more direct method to show such convergence, which relies on $p$-weak upper gradients only and which is valid regardless of structural assumptions on the ambient space, also in the limiting case when $p=1$.
\end{abstract}

\medskip

\section{Introduction}

\medskip

This work addresses a notable issue related to time-smoothing for time-dependent variational problems in the abstract setting of metric measure spaces, that is, the convergence of the mollified minimizers to the original functions with respect to the topology of the related parabolic functional spaces, namely spaces of Sobolev space-valued $L^p$ functions. While this property is known to hold even in the most general situation of arbitrary Banach space-valued $L^p$ functions by the standard theory of mollifiers (see for instance 
\cite{hvnvw,kjf,li}), when the target Banach space is a Sobolev space defined in the metric setting it is not straightforward to find a direct proof based explicitly on the ``metric gradients'', namely the {\it minimal $p$-weak upper gradients}, given the lack of linearity of such objects and the lack of smoothness in the ambient structure.
Indeed, when working with parabolic variational problems, in order to establish existence, regularity or other types of results for the minimizers, one needs to find first suitable partial estimates for both the time-mollified solution $f_\eps$ and its gradient, and then recovers the desired result by letting $\eps\to0$ and therefore by exploiting the convergence properties of time-smoothing. So, in the particular case of a time-dependent problem on a metric measure space, it is necessary to have an explicit proof of the smoothing result for the minimal $p$-weak upper gradients $g_{f_\eps}$ at hand in order to make use of such partial estimates.

The first successful attempt at showing this smoothing property for minimal $p$-weak upper gradients on metric measure spaces was featured in \cite{ms}, where the authors established a H\"older regularity condition for the (quasi) minimizers of the variational problem related to the parabolic $p$-Laplace equation, $p>2$, in the setting of a doubling metric measure space $(\X,d,\mu)$ supporting a weak $(1,p)$-Poincar\'e inequality. In their work, the functional space under consideration is the parabolic Newton-Sobolev space $L^p(0,\tau;N^{1,p}(\Omega))$, or its local version $L^p_\loc(0,\tau;N^{1,p}_\loc(\Omega))$.
As it is usually the case with parabolic variational problems, however, such time-regularity alone is not enough to prove the regularity condition, namely a {\it De Giorgi type estimate}. To this aim, the idea is to mollify the function $f$ in time via a standard time mollifier $\eta_\eps(s)$, $\eps>0$, and then to find the aforementioned partial estimates for both $f_\eps$ and its gradient; then, the final claim is realized by taking the limit as $\eps\to0$. 

In the Euclidean context this technique easily gives the sought-after De Giorgi estimate, thanks to the linearity of the gradient that allows for both $f_\eps\to f$ in $L^p_\loc(\omt)$ and $\nabla f_\eps\to \nabla f$ in $L^p_\loc(\omt)$ to hold as $\eps\to0$. In the metric setting, as we already said, the operation of taking upper gradients is not linear instead, so it seems not possible to prove $g_{f-f_\eps}\to0$ in $L^p_\loc(\omt)$ as $\eps\to0$ by simply relying on the theory of upper gradients. Indeed, in \cite[Lemma 6.8]{ms} the issue was circumvented by invoking a metric version of Rademacher's Theorem proved by J. Cheeger \cite[Theorem 4.38]{ch} and then by using the $\mu$-almost everywhere comparability between Cheeger's derivative and the minimal $p$-weak upper gradient applied to the time mollification $f_\eps$.

It is worth mentioning that, besides regularity, time-smoothing plays a relevant role also in the proof of the existence of a unique minimizer to a certain parabolic functional; this is for instance the case of \cite{bdm}, where this technique is used extensively to prove existence for the Total Variation Flow (\textsc{tvf}) in $\R^n$. More recently, the use of time mollification has gathered increasing attention also in the non-smooth setting, and applications of \cite[Lemma 6.8]{ms} have led to interesting results related to existence \cite{bcp,cohe}, stability \cite{fh2}, higher integrability \cite{fh,hab,mmpp,mp}, comparison principles and Harnack inequalities \cite{kinmas,mm}.

All the above mentioned works (with the only exception of \cite{bcp}), however, deal with problems where $p>1$ on doubling spaces supporting a Poincar\'e inequality. In fact the result contained in \cite[Lemma 6.8]{ms}, despite its depth and strength, shows however the following limitations implied by the use of Cheeger's theory:
\begin{enumerate}
 \item\label{quest-p} The range of validity is that of all the exponents $p\in(1,\infty)$, excluding therefore the limiting case $p=1$ which corresponds, for instance, to time-dependent variational problems involving functions of bounded variation ($BV$), like the \textsc{tvf}.
 \item\label{quest-dp} The setting is limited only to metric measure spaces satisfying the doubling and Poincar\'e requirements.
\end{enumerate}

Regarding the first issue, we observe that one of the most widely used notions of $BV$ functions, in the metric setting,
makes use of a relaxation procedure performed on minimal 1-weak upper gradients of Lipschitz functions, \cite{mir}, with the total variation of a function $f$ on any domain $\Omega\subset\X$ being defined as
\begin{equation*}
 \|Df\|(\Omega)\coloneqq\inf\left\{\underset{j\to\infty}{\lim\inf}\int_\Omega g_{f_j}\d\mu;\;(f_j)_{j\in\N}\subset\lip_\loc(\Omega),\;f_j\underset{j\to\infty}{\longrightarrow} f\:\mathrm{in}\:L^1(\Omega)\right\}.
\end{equation*}

Therefore, it appears necessary to extend the validity of \cite[Lemma 6.8]{ms} to the limiting exponent $p=1$ in order to employ successfully the time-smoothing technique for the \textsc{tvf} problem if one aims at showing, for instance, some regularity properties for its minimizers. This was for example one of the key-issues in \cite{bkp}, where it was necessary to use explicitly the partial estimates made possible by the main result in the present work in order to establish a De Giorgi type estimate for the variational solutions of the \textsc{tvf}, and then their continuity at some point\footnote{The lack of partial estimates at the early stages of \cite{bkp} was actually the starting point of the present study.}.

It turns out that the solution to this now longstanding question is contained in the very definition of the parabolic spaces $L^p(0,\tau;N^{1,p}(\Omega))$ itself and in the related measure-theoretic properties of such spaces, combined with the formal notion of parabolic minimal $p$-weak upper gradients and their behavior and, last but not least, with the customary notion of time mollification of a parabolic Sobolev function via a standard time mollifier.

All of these technical tools do not require to impose any structural assumption on $(\X,d,\mu)$ - like doubling measures or Poincar\'e inequalities - and in particular, as just explained, they allow for the time-smoothing result to hold true even when $p=1$. Thus said, we stress the fact that our result, namely Theorem \ref{vbthm} below, is not just an improvement of what is already available in the literature but it is actually a new result, as it is proved through radically different arguments that rely just on basic definitions and properties which not only are independent of a specific structure of the underlying metric measure space and avoid to involve any extra machinery, but moreover they also provide a way to recover the desired convergence by just using the theory of weak upper gradients alone, giving therefore a positive answer to a longstanding open problem.

\bigskip

\medskip

The paper is organized as follows.

\begin{itemize}
 \item In Section \ref{sec-sobolev} we start with the standard definition of Newton-Sobolev spaces $N^{1,p}(\X)$, $p\in[1,\infty)$, by introducing the basic notions of $p$-Modulus of a family of curves and of $p$-weak upper gradients, recalling their salient properties.
 \item In Section \ref{sec-t-dep} we introduce the time-dependent case, namely the parabolic Newton-Sobolev spaces $L^p(0,\tau;N^{1,p}(\Omega))$, where $p\in[1,\infty)$, $\tau>0$ and $\Omega\subset\X$ is an open set. We start by recalling the essentials of the theory for Banach space-valued $L^p$ functions in Section \ref{sec-banach-lp} and then in Section \ref{sec-par-sob} we specialize our discussion for the parabolic Newton-Sobolev spaces by stressing their measure-theoretic properties in connection with the general theory, in particular for the parabolic minimal $p$-weak upper gradients.
 We then conclude with Section \ref{sec-smoothing} where we eventually consider the main problem of the present work by first recalling the time-smoothing technique and then by showing Theorem \ref{vbthm}.
\end{itemize}

\bigskip

\section{Sobolev Spaces}\label{sec-sobolev}

\medskip

In this work, $(\X,d,\mu)$ will always be a complete and separable metric measure space endowed with a non-negative Radon measure $\mu$.

The Lebesgue spaces $L^p(\X,\mu)$, as well as $L^p(\Omega,\mu)$ for any domain (open set) $\Omega\subset\X$, $p\in[1,\infty]$, will be defined in the usual way; see for instance  \cite[Section 3.2]{hkst}. 
Since we are going to work with the reference measure $\mu$ only, this will be omitted from the notation and we shall simply write $L^p(\X)$ or $L^p(\Omega)$.

By $L^p_\loc(\Omega)$ we shall intend the space of functions in $L^p(U)$ with $U\Subset\Omega$, where such inclusion has to be read as 
\begin{equation*}U\subset\Omega\qquad\text{is\;bounded\;and}\qquad\text{dist}(U,\Omega^c)>0.\end{equation*}
The same notation and interpretation will apply also to the spaces of local Sobolev functions.

In the present section we shall discuss the notion of Sobolev spaces by means of upper gradients. We will consider only the classical ``Newtonian'' characterization of the Sobolev spaces $N^{1,p}(\X)$ \cite{hkst,sh}, even though other equivalent definitions are available in the literature; we refer the interested reader to \cite{ags-calc,ags-dens}.

The characterizations of Sobolev functions on open sets $\Omega\subset\X$, as well as their local versions, will follow by simply considering the domain $\Omega$ as a metric space in its own respect, together with the restrictions $d_\Omega$ and $\mu\mres\Omega$ of the distance $d$ and of the measure $\mu$ to $\Omega$, respectively.

\subsection{Newton-Sobolev spaces}\label{new-sob}

\medskip

The ``Newtonian'' approach to first-order Sobolev spaces in metric measure spaces is perhaps the most classical and known in the literature. 
Based on the now-familiar concept of \textit{weak upper gradient}, this characterization is rooted in the seminal papers \cite{hk1,hk2}, where a notion of \textit{very weak gradient} made its first appearance (the denomination \textit{upper gradient} came up slightly afterwards,  \cite{km}),  and was later developed by \cite{sh} via an application of the theory of $p$-modulus for families of curves previously studied by \cite{fu}.

The very brief presentation we shall give here is loosely adapted from the discussion of \cite{hkst}.

\medskip

\begin{definition}\label{def-modulus}
 Let $\Gamma\subset AC([0,1],\X)$ denote a family of absolutely continuous curves $\gamma:[0,1]\to\X$. For $p\in[1,\infty)$, the $p$\textit{-modulus} of $\Gamma$ is defined as the quantity
 \begin{equation}\label{modulus}
  \mod_p(\Gamma)\coloneqq\inf\left\{\int_\X \rho^p\mathrm{d}\mu;\;\rho:\X\to[0,\infty]\;\text{Borel},\;\int_\gamma \rho\mathrm{d}s\ge1\:\forall\gamma\in\Gamma\right\}.
 \end{equation}
 Any map $\rho$ as in \eqref{modulus} will be called an \textit{admissible density}. We shall say that $\Gamma$ is $\mod_p$\textit{-negligible} whenever $\mod_p(\Gamma)=0$.
\end{definition}

\smallskip

\begin{definition}
 Given a function $f:\X\to\overline{\mathbb{R}}$, a Borel map $g:\X\to[0,\infty]$ is said to be an \textit{upper gradient} for $f$ if
 \begin{equation}\label{up-grad}
  \left\vert f(\gamma_1)-f(\gamma_0)\right\vert\le\int_\gamma g\mathrm{d}s
 \end{equation}
for every absolutely continuous curve $\gamma:[0,1]\to\X$. Here, $\gamma_0=\gamma(0)$ and $\gamma_1=\gamma(1)$ denote the endpoints of the curve $\gamma$, while $s$ stands for its arc-length parametrization.

If \eqref{up-grad} fails to hold on a $\mod_p$-negligible family, then $g$ will be called a $p$\textit{-weak upper gradient} for $f$.
\end{definition}

\smallskip

\begin{definition}
 We say that a $p$-integrable $p$-weak upper gradient $g$ of some function $f:\X\to\R$ is a \textit{minimal $p$-weak upper gradient} for $f$ whenever $g\le h$ $\mu$-almost everywhere for any $p$-weak $p$-integrable upper gradient $h$ of $f$; we shall denote it as $g_f$.
\end{definition}

\smallskip

Of course, when a minimal $p$-weak upper gradient exists, it is the one with smallest $L^p$ norm among $p$-weak upper gradients. In particular - see for instance \cite[Theorem 6.3.20]{hkst} - the minimal $p$-weak upper gradient is unique and any function which admits a $p$-integrable $p$-weak upper gradient has a minimal one.

\medskip

With these tools available, we can realize first the following notion of Sobolev-Dirichlet spaces:

%\smallskip 

\begin{definition}\label{dirn}
 The \textit{Newtonian Sobolev-Dirichlet class} $D^{1,p}(\X)$ consists of all the measurable maps $f:\X\to\R$ which possess a $p$-integrable $p$-weak upper gradient in $\X$. $D^{1,p}(\X)$ is a vector space equipped with the semi-norm
 \begin{equation*}
  \|f\|_{D^{1,p}(\X)}\coloneqq\|g_f\|_{L^p(\X)}.
 \end{equation*}
\end{definition}

\smallskip

\begin{remark}\label{up-grad-prop}We recall that the minimal $p$-weak upper gradient satisfies the following well-known properties:
\begin{enumerate}
 \item \underline{Sub-linearity}: $g_{\alpha u+\beta v}\le|\alpha|g_u+|\beta|g_v$ for all $u,v\in D^{1,p}(\X)$, $\alpha,\beta\in\R$;
 \item \underline{Weak Leibniz rule}: $g_{uv}\le|u|g_v+|v|g_u$ for all $u,v\in D^{1,p}\cap L^\infty(\X)$;
 \item \underline{Locality}: $g_u=g_v$ $\mu$-almost everywhere on $\{u=v\}$ for all $u,v\in D^{1,p}(\X)$;
 \item \underline{Chain rule}: if $\varphi:\R\to\R$ is Lipschitz and $u\in D^{1,p}(\X)$, then $\varphi\circ u\in D^{1,p}(\X)$ and $g_{\varphi\circ u}=|\varphi'\circ u|g_u$ $\mu$-almost everywhere.
\end{enumerate}

\end{remark}

\smallskip

\begin{definition}\label{sobn}
 Let us now consider the class of all $L^p(\X)$ functions which admit a $p$-integrable $p$-weak upper gradient, namely $\tilde{N}^{1,p}(\X)\coloneqq D^{1,p}\cap L^p(\X)$.
 
 On this vector space we define the semi-norm
 \begin{equation}\label{sobn-norm}
  \|f\|^p_{\tilde{N}^{1,p}(\X)}\coloneqq\|f\|^p_{L^p(\X)}+\|g_f\|^p_{L^p(\X)}.
 \end{equation}
 Then, the \textit{Newton-Sobolev space} $N^{1,p}(\X)$ is given as the normed space consisting of equivalence classes of functions in $\tilde{N}^{1,p}(\X)$, with any two functions $u,v$ being equivalent if and only if $\|u-v\|_{\tilde{N}^{1,p}(\X)}=0$. In other words,
 \begin{equation*}
  N^{1,p}(\X)\coloneqq\tilde{N}^{1,p}(\X)\Big/\left\{f\in\tilde{N}^{1,p}(\X);\;\|f\|_{\tilde{N}^{1,p}(\X)}=0\right\}.
 \end{equation*}
$N^{1,p}(\X)$ is a Banach space (see \cite[Theorem 7.3.6]{hkst}, or \cite[Theorem 3.7]{sh}) endowed with the quotient norm $\|\cdot\|_{N^{1,p}(\X)}$ defined in the same way as in \eqref{sobn-norm}.
\end{definition}

\medskip

\section{The time-dependent case}\label{sec-t-dep}

\medskip

In this section we present the construction of the time-dependent Sobolev spaces on space-time cylinders of the form $\Omega_\tau\coloneqq\Omega\times\ztau$, $\tau>0$, where $\Omega\subset\X$ is any open set.

Before giving the main definition, we recall the customary construction of Banach space-valued $L^p$ spaces, $p\in[1,\infty)$. As standard references for this, one might consider \cite{du,ds,hkst,ruz}.

\bigskip

\subsection{Generalities on Banach space-valued $L^p$ functions}\label{sec-banach-lp}

\medskip

Let $(\S,\Sigma,\nu)$ be a complete, $\sigma$-finite measure space, $\Y$ a Banach space and, for any given set $E$, let us denote by $\one_E$ its characteristic function,

\begin{equation*}
\one_E(x)\coloneqq \begin{cases} 1 & x\in E, \\ \mbox{} \\ 0 & x \notin E. \end{cases}
\end{equation*}

\begin{definition}
A function $f:A\to\Y$, $A\in\Sigma$, is said to be \textit{strongly measurable}, or just \textit{measurable}, if there is a sequence of simple functions $(f_k)_{k\in\N}\subset\Y$,
\begin{equation}\label{seq-simple}
 f_k=\sum_{i=1}^{n_k}\one_{E_i^{(k)}}\cdot v_i^{(k)}\quad\forall\,k\in\N,
\end{equation}
where $\{E_i^{(k)}\}_{i=1}^{n_k}$ is a measurable disjoint partition of $A$ and $\{v_i^{(k)}\}_{i=1}^{n_k}\subset\Y$, such that $f$ is the $\nu$-almost everywhere limit of the $f_k$'s, namely (upon relabeling)
\begin{equation}\label{ae-simple-abstract}
 f=\sum_{k=1}^\infty \one_{E_k}\cdot v_k
\end{equation}
with the $E_k$'s and the $v_k$'s to be intended as in \eqref{seq-simple}.
\end{definition}

Thus said, the characterization of $\Y$-valued $L^p$ functions reads as follows:

\smallskip

\begin{definition}\label{def-ban-lp}
 Let $A\in\Sigma$ and let $p\in[1,\infty)$. We define $L^p(A;\Y)$ as the vector space of all equivalence classes of measurable functions $f:A\to\Y$ such that
 \begin{equation}\label{leb-int-norm}
  \int_A\|f\|_{\Y}^p\d\nu<\infty,
 \end{equation}
which means that $\|f\|^p_\Y$ is $\nu$-integrable. Here, two functions are considered equivalent whenever they coincide $\nu$-almost everywhere.

$L^p(A;\Y)$ equipped with the norm
\begin{equation}\label{ban-lp-norm}
 \|f\|_{L^p(A;\Y)}\coloneqq\left(\int_A\|f\|^p_{\Y}\d\nu\right)^\frac{1}{p}
\end{equation}
is a Banach space.
\end{definition}

We observe that \eqref{ae-simple-abstract} is equivalent to ask that $\|f(s)-f_k(s)\|_\Y\to0$ as $k\to\infty$ for $\nu$-almost every $s$. Moreover, the measurability requirement combined with \eqref{leb-int-norm} entails the Bochner integrability of $f$ by Bochner's Theorem \cite{boch}, meaning that
\begin{equation*}
 \underset{k\to\infty}{\lim}\int_A\|f-f_k\|_\Y\d\nu\longrightarrow0.
\end{equation*}

%\smallskip

\subsection{Parabolic Sobolev Spaces}\label{sec-par-sob}

\medskip

Based on the preceding section, we can now characterize the class of parabolic Newton-Sobolev functions $f\in L^p(0,\tau;N^{1,p}(\Omega))$, $p\in[1,\infty)$. Following the previous notation, we will now have $A=\ztau$, $\tau>0$, $\nu=\leb^1$ - the Lebesgue measure on $\R$ - and $\Y=N^{1,p}(\Omega)$ with $\Omega$ open set in the metric measure space $(\X,d,\mu)$.

\medskip

\begin{definition}\label{par-sob} 
 We define the \textit{parabolic Newton-Sobolev space} $L^p(0,\tau; N^{1,p}(\Omega))$, $p\in[1,\infty)$, to be the vector space of all equivalence classes of measurable functions $f:\ztau\to N^{1,p}(\Omega)$ such that
 \begin{equation*}
  \int_0^\tau\|f(t)\|^p_{N^{1,p}(\Omega)}\d t<\infty.
 \end{equation*}
In other words, for $\leb^1$-almost every $t\in\ztau$, the map $t\mapsto f(t)$ defines a function in $N^{1,p}(\Omega)$. $L^p(0,\tau;N^{1,p}(\Omega))$ is a Banach space equipped with the norm
\begin{equation*}
 \|f\|_{L^p(0,\tau;N^{1,p}(\Omega))}\coloneqq\left(\int_0^\tau\|f(t)\|_{N^{1,p}(\Omega)}^p\d t\right)^\frac{1}{p}.
\end{equation*}
\end{definition}

By the discussion of Section \ref{sec-banach-lp} we clearly have that for any $f\in L^p(0,\tau;N^{1,p}(\Omega))$ there is a sequence of simple functions $(f_k)_{k\in\N}\subset N^{1,p}(\Omega)$,
\begin{equation}\label{simple-np}
 f_k=f_k(t)=\sum_{i=1}^{n_k}\one_{E_i^{(k)}}(t)\cdot v_i^{(k)}\quad\forall\,k\in\N
\end{equation}
where $\{E_i^{(k)}\}_{i=1}^{n_k}$ is a measurable disjoint partition of $\ztau$ and $\{v_i^{(k)}\}_{i=1}^{n_k}\subset N^{1,p}(\Omega)$, such that $f$ is the $\leb^1$-almost everywhere limit on $\ztau$ of the $f_k$'s, or that (upon relabeling)
\begin{equation}\label{ae-lim-np}
 f(t)=\sum_{k=1}^\infty\one_{E_k}(t)\cdot v_k
\end{equation}
with the $E_k$'s and the $v_k$'s to be intended as in \eqref{simple-np}. As already noted right after Definition \ref{def-ban-lp}, the pointwise $\leb^1$-almost everywhere convergence \eqref{ae-lim-np} means that
\begin{equation*}\label{simple-approx}
 \|f-f_k\|_{N^{1,p}(\Omega)}\longrightarrow0\qquad\text{as}\;k\to\infty\;\text{for}\:\leb^1\text{-almost\:every}\;t\in\ztau.
\end{equation*}
Of course, Bochner's Theorem applies also in this case so we have that any parabolic function  $f\in L^p(0,\tau;N^{1,p}(\Omega))$ is integrable in the sense of Bochner.

\medskip

Together with the definition of time-dependent Newton-Sobolev spaces, it comes a natural notion of the parabolic minimal $p$-weak upper gradients:

\medskip

\begin{definition}
Let $f\in L^p(0,\tau;N^{1,p}(\Omega))$. We define the \textit{parabolic minimal $p$-weak upper gradient} of $f$ by simply setting
\begin{equation}\label{par-up-grad}
 g_f=g_{f(t)}
\end{equation}
for $\leb^1$-almost every $t\in\ztau$.
\end{definition}

We observe that the above notion is well posed since for $\leb^1$-almost every $t\in\ztau$, $t\mapsto f(t)\in N^{1,p}(\Omega)$ and then \eqref{par-up-grad} is well defined.

\bigskip
\sloppy
It is worth to notice that in the characterization of the parabolic spaces $L^p(0,\tau;N^{1,p}(\Omega))$ it is intrinsically implied a \textit{time-slice} approach, as the functions together with their parabolic minimal $p$-weak upper gradients are defined up to $\leb^1$-negligible subsets of $\ztau$.

\smallskip

It is also important to spend some words on the $\mu\otimes\leb^1$-measurability in the product space $\omt$ for $f\in L^p(0,\tau;N^{1,p}(\Omega))$ and for its parabolic minimal $p$-weak upper gradient $g_{f(t)}$. Indeed, our main focus is on time-smoothing and, since the implementation of this technique in the proof of regularity or existence results involves partial integration with respect to the time variable, one has to ensure that it is possible to apply Fubini's Theorem.

\medskip

\begin{remark}[Measurability in the product space $\omt$]  \label{prod-meas} Since by definition $N^{1,p}(\Omega)\subset L^p(\Omega)$, the map $t\mapsto f(t)\in N^{1,p}(\Omega)$ which defines a parabolic Sobolev function $f\in L^p(0,\tau;N^{1,p}(\Omega))$ is clearly also a map from $\ztau$ to $L^p(\Omega)$ for $\leb^1$-almost every $t\in\ztau$, and of course
\begin{equation*}
 \int_0^\tau \|f(t)\|^p_{L^p(\Omega)}\d t<\infty,
\end{equation*}
so we have actually a map in $L^p(0,\tau;L^p(\Omega))$, as it is obvious to expect since $L^p(0,\tau;N^{1,p}(\Omega))$ $\subset L^p(0,\tau;L^p(\Omega))$ by the very definitions of the spaces.
Now, as
\begin{equation*}
L^p(0,\tau;L^p(\Omega))=L^p(\omt),
 \end{equation*}
 (see for instance \cite[Section 2.1.1]{ruz}) we get that all the functions in $L^p(0,\tau;N^{1,p}(\Omega))$ are also measurable in the product space $\omt=\Omega\times\ztau$ equipped with the product measure $\mu\otimes\leb^1$.
 
 Let us now turn to parabolic minimal $p$-upper gradients. It is clear that since $f\in L^p(0,\tau;N^{1,p}(\Omega))$, then one has $g_f=g_{f(t)}\in L^p(\Omega)$ for $\leb^1$-almost every $t\in\ztau$ and
 \begin{equation*}
  \int_0^\tau \|g_{f(t)}\|^p_{L^p(\Omega)}\d t<\infty,
 \end{equation*}
so that, in other words, $t\mapsto g_{f(t)}$ defines a map in $L^p(0,\tau;L^p(\Omega))=L^p(\omt)$, from which we infer that $g_{f(t)}$ is $\mu\otimes\leb^1$-measurable in $\omt$ as well.

In particular, $g_{f(t)}$ is also strongly measurable in the sense of $L^p(0,\tau;L^p(\Omega))$. Indeed, as $f(t)$ is the $\leb^1$-almost everywhere limit of the $f_k$'s defined in \eqref{simple-np}, being strongly measurable in the sense of $L^p(0,\tau;N^{1,p}(\Omega))$, combining \eqref{ae-lim-np} with the locality of minimal $p$-weak upper gradients we get that
\begin{align}\label{strong-up-grad}\begin{split}
 g_{f(t)} & = g_{\sum_{k=1}^\infty \one_{E_k}(t)\cdot v_k} \\ & = \sum_{k=1}^\infty \one_{E_k}(t)\cdot g_{v_k}\end{split}
\end{align}
$\mu\otimes\leb^1$-almost everywhere since the $E_k$'s are taken to be disjoint and the functions $\one_{E_k}(t)$ act as scalars with respect to upper gradients. Obviously, as $\{v_k\}_{k=1}^\infty\subset N^{1,p}(\Omega)$, $g_{v_k}\in L^p(\Omega)$ for all $k\in\N$, so the strong measurability follows naturally. Also, all of the above is equivalent to say that $g_{f(t)}$ is approximated in $L^p(\Omega)$ by the sequence $(g_{f_k})_{k\in\N}\subset L^p(\Omega)$,
\begin{equation*}
 g_{f_k(t)} = g_{\sum_{i=1}^{n_k} \one_{E_i^{(k)}}(t)\cdot v_i^{(k)}} = \sum_{i=1}^{n_k} \one_{E_i^{(k)}}(t)\cdot g_{v_i^{(k)}}
\end{equation*}
which we obtain from \eqref{simple-np} by arguing as in \eqref{strong-up-grad}, with the usual understanding of the $E_i^{(k)}$'s and of the $v_i^{(k)}$'s.
\end{remark}

\smallskip

\begin{remark} We note that by the discussion above it turns out that the functions $f=f(t)$ in the parabolic space $L^p(0,\tau;N^{1,p}(\Omega))$, which depend formally on the variable $t$, can actually be treated, in practice, as $\mu\otimes\leb^1$-measurable functions of both $x$ and $t$, i.e. as measurable functions $f:\omt\to\R$. Of course, the same point of view and interpretation can be applied to the parabolic minimal $p$-weak upper gradients as well.
\end{remark}

%\medskip

\subsection{Time-smoothing}\label{sec-smoothing}

\bigskip

Now that we have discussed the main definition and properties of parabolic Newton-Sobolev spaces and we have a consistent characterization of parabolic minimal $p$-weak upper gradients, we can proceed towards our result on time-smoothing. To get things started, we first recall the notion of time mollification:

\bigskip

\begin{definition}\label{t-moll}
 Given $f\in \lploc(0,\tau; N^{1,p}_\loc(\Omega))$ and a standard mollifier \[\eta_\varepsilon(s)=\dfrac{1}{s}\eta\left(\dfrac{s}{\eps}\right),\]
 $\eps>0$, we define the \textit{time mollification} $f_\eps$ of $f$ as follows:
\begin{equation}\label{eq-t-moll}
 f_\eps(t)\coloneqq\int_{-\eps}^{\eps}\eta_\eps(s)f(t-s)\d s.
\end{equation}
\end{definition}

\medskip

\begin{theorem}\label{vbthm}
 Let $(\X,d,\mu)$ be a complete and separable metric measure space equipped with a non-negative Radon measure $\mu$, and let $\Omega\subset\X$ be an open set. Then, for any $f\in L^p_\loc(0,\tau; N^{1,p}_\loc(\Omega))$, $p\in[1,\infty)$, if we denote by $f_\eps$ the time mollification of $f$, $\eps>0$, we have $g_{f_\eps-f}\to0$ in $L^p_\loc(\omt)$ as $\eps\to0$. Moreover, as $s\to0$, we have $g_{f(t-s)-f(t)}\to0$ in $L^p_\loc(\omt)$ uniformly in $t$.
\end{theorem}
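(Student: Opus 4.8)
The plan is to reduce the whole statement to two elementary ingredients: the trivial bound $\|g_u\|_{L^p(U)}\le\|u\|_{N^{1,p}(U)}$, valid for every $u\in N^{1,p}(U)$ and immediate from the definition \eqref{sobn-norm} of the Newton--Sobolev norm, together with the classical continuity of translations in Bochner spaces $L^p(I;\Y)$. The conceptual point---and the reason no analogue of the Euclidean identity ``$\nabla f_\eps=(\nabla f)_\eps$'' is ever needed---is that, reading the parabolic minimal $p$-weak upper gradient \emph{slice-wise} as in \eqref{par-up-grad} and invoking the product measurability established in Remark \ref{prod-meas}, the $L^p_\loc(\omt)$-size of $g_h$ is already controlled by the $L^p(0,\tau;N^{1,p}(\Omega))$-norm of the parabolic function $h$ itself.

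First I would fix a subcylinder $U\times(a,b)\Subset\omt$ with $U\Subset\Omega$ and $[a,b]\subset(0,\tau)$; since every $W\Subset\omt$ is contained in such a set, it suffices to prove both convergences on each such $U\times(a,b)$. For an arbitrary $h\in L^p(a,b;N^{1,p}(U))$ I would then record the key estimate
\[
\|g_h\|^p_{L^p(U\times(a,b))}=\int_a^b\!\int_U g_{h(t)}^p\,\d\mu\,\d t=\int_a^b\|g_{h(t)}\|^p_{L^p(U)}\,\d t\le\int_a^b\|h(t)\|^p_{N^{1,p}(U)}\,\d t=\|h\|^p_{L^p(a,b;N^{1,p}(U))},
\]
where the first equality is the definition of the parabolic gradient, the second is Tonelli's theorem (applicable because $g_{h(t)}$ is $\mu\otimes\leb^1$-measurable on the cylinder by Remark \ref{prod-meas}), and the inequality is the slice-wise bound applied to $h(t)$ for $\leb^1$-a.e.\ $t$.

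With this in hand the two assertions are short. For the ``Moreover'' part, I would apply the estimate to $h=f(\cdot-s)-f(\cdot)$, which for $|s|$ small enough is a legitimate element of $L^p(a,b;N^{1,p}(U))$ because $f\in L^p_\loc(0,\tau;N^{1,p}_\loc(\Omega))$ and whose parabolic gradient at time $t$ is $g_{f(t-s)-f(t)}$; thus
\[
\|g_{f(\cdot-s)-f(\cdot)}\|_{L^p(U\times(a,b))}\le\|f(\cdot-s)-f(\cdot)\|_{L^p(a,b;N^{1,p}(U))}\longrightarrow0\quad(s\to0),
\]
the limit being the standard continuity of translation for $N^{1,p}(U)$-valued $L^p$-maps on an interval (approximation by step functions; cf.\ \cite{hvnvw,kjf,li}); since the right-hand side is a nonnegative quantity tending to $0$ with $s$, its supremum over $0<|s'|\le|s|$ also tends to $0$, which gives the uniformity. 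For the first part, using $\int_{-\eps}^{\eps}\eta_\eps=1$ one writes $f_\eps(t)-f(t)=\int_{-\eps}^{\eps}\eta_\eps(s)\bigl(f(t-s)-f(t)\bigr)\,\d s$, so the triangle inequality for the Bochner integral together with the key estimate yields
\[
\|g_{f_\eps-f}\|_{L^p(U\times(a,b))}\le\|f_\eps-f\|_{L^p(a,b;N^{1,p}(U))}\le\int_{-\eps}^{\eps}\eta_\eps(s)\,\|f(\cdot-s)-f(\cdot)\|_{L^p(a,b;N^{1,p}(U))}\,\d s\le\sup_{|s|\le\eps}\|f(\cdot-s)-f(\cdot)\|_{L^p(a,b;N^{1,p}(U))},
\]
which vanishes as $\eps\to0$ by the previous step (equivalently, one may just quote that mollification converges in $L^p(a,b;N^{1,p}(U))$).

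The only point requiring real care is the second equality in the key estimate, namely that the slice-wise object $g_{h(t)}$ is a genuine $\mu\otimes\leb^1$-measurable function on the cylinder, so that Fubini--Tonelli applies and $\|g_h\|_{L^p_\loc(\omt)}$ really equals $\bigl(\int\|g_{h(t)}\|_{L^p}^p\,\d t\bigr)^{1/p}$. This, however, is precisely what the simple-function approximation \eqref{strong-up-grad} and the locality of minimal $p$-weak upper gradients provide, and it has already been settled in Remark \ref{prod-meas}; everything else is the one-line norm inequality and textbook Bochner-space facts, so neither a doubling/Poincar\'e hypothesis on $(\X,d,\mu)$ nor the restriction $p>1$ plays any role.
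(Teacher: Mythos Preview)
Your proof is correct and takes a genuinely different, more streamlined route than the paper's. The paper works explicitly with the countable simple-function representation $f(t)=\sum_k \one_{E_k}(t)\cdot v_k$ of Remark \ref{prod-meas} and uses locality and sub-linearity of minimal $p$-weak upper gradients to bound $g_{f-f_\eps}\le\sum_k|\one_{E_k}-(\one_{E_k})_\eps|\cdot g_{v_k}$ (and similarly for translates), then lets $\eps\to0$ term by term. You bypass this decomposition entirely via the single slice-wise norm inequality $\|g_h\|_{L^p(U\times(a,b))}\le\|h\|_{L^p(a,b;N^{1,p}(U))}$ and then invoke continuity of translations and mollification directly in the Bochner space $L^p(a,b;N^{1,p}(U))$. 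Your argument is shorter and more robust: it never manipulates infinite sums of upper gradients and does not need to justify the passage from $\mu\otimes\leb^1$-a.e.\ convergence of the majorant series to $L^p_\loc$ convergence, a step the paper leaves implicit. What the paper's approach buys is an explicit pointwise majorant for $g_{f-f_\eps}$ in terms of the $g_{v_k}$'s, i.e.\ a ``hands-on'' computation within upper-gradient calculus; your approach treats $N^{1,p}(U)$ purely as a Banach target and recovers the upper-gradient conclusion from the trivial norm bound at the very end.
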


\begin{proof} Since we have
\[
 f(t)=\sum_{k=1}^\infty \one_{E_k}(t)\cdot v_k
\]
for $\leb^1$-almost every $t\in\ztau$, applying the definition \eqref{eq-t-moll} of time mollification gives
\begin{align*}
 f_\eps(t)=\int_{-\eps}^\eps \eta_\eps(s)f(t-s)\d s&=\int_{-\eps}^\eps \eta_\eps(s)\sum_{k=1}^\infty \one_{E_k}(t-s)\cdot v_k \,\d s \\
 & = \sum_{k=1}^\infty \left(\int_{-\eps}^\eps \eta_\eps(s)\one_{E_k}(t-s)\,\d s\right)\cdot v_k \\ & = \sum_{k=1}^\infty (\one_{E_k})_\eps(t)\cdot v_k, 
\end{align*}
which immediately yields
\[
 f(t)-f_{\eps}(t)=\sum_{k=1}^\infty \left(\one_{E_k}-(\one_{E_k})_\eps\right)\cdot v_k
\]
$\leb^1$-almost everywhere on $\in\ztau$.

Now, by the locality and by the sub-linearity of minimal $p$-weak upper gradients, we easily infer that
\begin{align*}
 g_{f-f_\eps} & = g_{\sum_{k=1}^\infty \left(\one_{E_k}-(\one_{E_k})_\eps\right)\cdot v_k} \\ & \le \sum_{k=1}^\infty g_{\left(\one_{E_k}-(\one_{E_k})_\eps\right)\cdot v_k} \\ & = \sum_{k=1}^\infty \left\vert\one_{E_k}-(\one_{E_k})_\eps\right\vert\cdot g_{v_k}\underset{\eps\to0}{\longrightarrow}0
\end{align*}
$\leb^1$-almost everywhere on $\ztau$ by the standard properties of mollifications, meaning also  that $g_{f-f_\eps}\to0$ as $\eps\to0$ $\mu\otimes\leb^1$-almost everywhere in $\omt$, which eventually forces $g_{f-f_\eps}\to0$ in $\lploc(\omt)$ as $\eps\to0$.

We explicitly observe again that the factors $\one_{E_k}-(\one_{E_k})_\eps$, being functions of $t$ only, act scalarly with respect to upper gradients.

\medskip

It remains to prove that $g_{f(t-s)-f(t)}\to0$ in $\lploc(\omt)$ as $s\to0$, uniformly in $t$. Again by the locality of the minimal $p$-weak upper gradient, and by the observations in Remark \ref{prod-meas}, for every $s>0$ one has
\[
 g_{f(t-s)-f(t)}=\sum_{k=1}^\infty\left\vert\one_{E_k}(t-s)-\one_{E_k}(t)\right\vert\cdot g_{v_k}
\]
$\mu\otimes\leb^1$-almost everywhere in $\omt$. Then, we can consider $K\Subset\omt$ of the form $K=U\times I$ with $U\Subset\Omega$ and $I$ compactly contained in $\ztau$, to find
\sloppy
\begin{align*}
\|g_{f(t-s)-f(t)}\|_{L^p(K)} & = \left(\int_K g^p_{f(t-s)-f(t)}\d\mu\d t\right)^{\frac{1}{p}}\\ & =\left(\int_K\left(\sum_{k=1}^\infty\left\vert\one_{E_k}(t-s)-\one_{E_k}(t)\right\vert\cdot g_{v_k}\right)^p\d\mu\d t\right)^{\frac{1}{p}}\\
& =\left\Vert\sum_{k=1}^\infty\left(\one_{E_k}(t-s)-\one_{E_k}(t)\right)\cdot g_{v_k}\right\Vert_{L^p(K)}\\
& \le \sum_{k=1}^\infty\left\Vert\left(\one_{E_k}(t-s)-\one_{E_k}(t)\right)\cdot g_{v_k}\right\Vert_{L^p(K)}\\
&= \sum_{k=1}^\infty\left(\int_K\left\vert\one_{E_k}(t-s)-\one_{E_k}(t)\right\vert^p\cdot g_{v_k}^p\d\mu\d t\right)^{\frac{1}{p}}\\
& =\sum_{k=1}^\infty\left(\int_I\left\vert\one_{E_k}(t-s)-\one_{E_k}(t)\right\vert^p\d t\cdot\int_U g_{v_k}^p\d\mu\right)^{\frac{1}{p}}\\
& = \sum_{k=1}^\infty\|\one_{E_k}(t-s)-\one_{E_k}(t)\|_{L^p(I)}\cdot\|g_{v_k}\|_{L^p(U)},
\end{align*}

where we used Minkowski's Inequality and Fubini's Theorem.\\Now, since the functions $\one_{E_k}\in L^p_\loc\ztau$, the expression above vanishes uniformly in $t$ as $s\to0$ by the continuity of translations on $L^p$ functions.

\end{proof}

\bigskip

\begin{adjustwidth}{1cm}{1cm}
\begin{center}\textbf{Aknowledgements.}\end{center} 
This research was partially supported by the Academy of Finland during the Author's stay at the Department of Mathematics and Systems Analysis at Aalto University (Espoo, Finland).\\
We wish to thank Juha K. Kinnunen for valuable discussions on the topic of time-smoothing in metric measure spaces and for encouraging the writing of this paper.
\end{adjustwidth}

\bigskip

%\section{bibliography}

\end{document}